\newtheorem{theorem}{Theorem}
\newcommand{\grad}{\mathop{\rm grad}\nolimits}
\renewcommand{\div}{\mathop{\rm div}\nolimits}
\journal{arXiv.org}
\begin{document}

\begin{frontmatter}

\title{Splitting schemes for poroelasticity and thermoelasticity problems\tnoteref{label1}}
\tnotetext[label1]{Supported by CJSC "OptoGaN" (contract N02.G25.31.0090); RFBR (project N13-01-00719A)}

\author[ykt]{Kolesov A.E.}
\ead{kolesov.svfu@gmail.com}

\author[msc]{Vabishchevich P.N.\corref{cor1}}
\ead{vabishchevich@gmail.com}

\author[ykt]{Vasilyeva M.V.}
\ead{vasilyevadotmdotv@gmail.com}

\cortext[cor1]{Corresponding author}

\address[ykt]{North-Eastern Federal University, Yakutsk, Russia}
\address[msc]{Nuclear Safety Institute, RAS, Moscow, Russia}

\begin{abstract}
In this work, we consider the coupled systems of linear unsteady partial differential equations, which arise in the modeling of poroelasticity processes. 
Stability estimates of weighted difference schemes for the coupled system of equations are presented.
Approximation in space is based on the finite element method.
We construct splitting schemes and give some numerical comparisons for typical poroelasticity problems. The results of numerical simulation of a 3D problem are presented. Special attention is given to using hight performance computing systems.
\end{abstract}

\begin{keyword}
poroelasticity, thermoelasticity, splitting schemes, regularization, high performance computing

\MSC 35Q74 \sep 65M12 \sep 65M60

\end{keyword}

\end{frontmatter}

\section{Introduction}

Poroelasticity \cite{biot1941general, wang2000theory, minkoff2003coupled, Meirmanov} and thermoelasticity problems \cite{biot1956thermoelasticity, plas_lub, simocomputational, nowacki1975dynamic} are mathematically and physically analogous due to the fact that the pressure and temperature play a similar role in deformation of a body. 
For instance, a change in the temperature or a change in the pressure in a body results in equal normal strains in three orthogonal directions and no shear strains. 

The basic mathematical models of such problems include the Lame equation for the motion and the pressure/temperature  equations.
The fundamental point is that the system of equations is coupled: the equation for the motion includes the volume force, which is proportional to the temperature/pressure gradient, and the temperature/pressure equations include the term, which describes the compressibility of a medium.

To solve numerically the coupled quasi-stationary linear system of equations, we approximate our system using the finite element method \cite{hughes2012finite, fem_z1}. 
Variational formulations for poroelasticity and thermoelasticity problems and finite element approximations  are considered in \cite{haga2012causes, minkoff2006comparison, armero1999formulation}. 
Due to the incompressibility constraint on the displacement field at the initial state (incompressible elasticity and Stokes type problem), the finite element interpolation with equal order spaces for both the displacement and pressure/temperature fields is not correct. 
It is well known that in classical mixed formulations, the finite element spaces must satisfy the LBB stability conditions \cite{babuvska1973finite, brezzi1974existence, brezzi1991mixed}. 
These kind of discretizations for poroelasticity problems provides a lower order of convergence for the pressure in comparison with the displacements. 

Quasi-stationary problem (steady for motion and unsteady for the temperature/pressure) 
is solved using the weighted scheme \cite{LeVeque2007, Ascher2008}.
The stability analysis is performed \cite{gaspar2003finite, lisbona2001operator} in the framework of the general theory of stability for operator-difference schemes \cite{Samarskii1989,SamarskiiMatusVabischevich2002}. 

At present, different classes of additive operator--difference schemes for evolutionary equations are constructed via an additive representation of the main operator onto several terms. 
Additive schemes are constructed using  splitting; they are associated with transition to a new time level on the basis of the solution of simpler problems for individual operators in the additive decomposition \cite{Marchuk1990,vabAdd}. 
We consider splitting (additive) schemes for poroelasticity/thermoelasticity problems with additive representation of the operator at the time derivative; we also consider  modifications of splitting schemes.
Similar schemes are examined in many studies \cite{kim2010sequential, mikelic2013convergence, armero1992new, jha2007locally}, but in our work, we show that these schemes are no more than regularization schemes \cite{samarskii1967regularization, vabAdd}.

The work is organized as follows.
Section 2 provides the mathematical model for the  poroelasticity problem, which is the same as the thermoelasticity problem. 
In Section 3, we consider properties of the differential problem and give a priory estimates for the stability for the solutions with respect to initial the data and the right-hand side. In this section, we also formulate our problem as  an initial value problem for a system of linear ordinary differential equations. 
Discretizations in space and in time are performed in Section 4. Here we conduct some analysis of the weighted differential schemes for coupled system. 
The central part of the work deals with the construction and numerical comparison of the stability of the splitting schemes.
In Section 5, we consider the additive (splitting) schemes that are compared for typical poroelasticity problems in Section 7.
Some modifications for additive (splitting) schemes associated with regularization are introduced in Section 6.
The results of numerical simulation of a 3D problem on hight performance computing systems are presented in Section 8. 

\section{Problem formulation} 

The linear poroelasticity equations can be expressed  \cite{biot1941general} as
\begin{equation}
\label{eq:poroelas}
\begin{split}
 \div \bm \sigma (\bm u) - \alpha \grad (p)  & = 0, \\
 \alpha \frac{\partial \div \bm u}{\partial t} + S \frac{\partial p}{\partial t} - \div \left( \frac{k}{\nu} \grad p \right) & = f(\bm x, t), 
\end{split}
\end{equation}
with the boundary conditions
\begin{equation}
\label{eq:bc}
\begin{split}
\bm \sigma \bm n = 0, \quad \bm x \in \Gamma^u_N, \quad & 
\bm u = 0, \quad \bm x \in \Gamma^u_D, \\
-\frac{k}{\nu} \frac{\partial p}{\partial n}= 0, \quad \bm x \in \Gamma^p_N, \quad & 
p = p_1, \quad \bm x \in \Gamma^p_D, \\
\end{split}
\end{equation}
and the initial conditions 
\begin{equation}
\label{eq:ic}
p(\bm x, 0) = p_0, \quad \bm x \in \Omega.
\end{equation}
Here the primary variables are  the fluid pressure $p$ and  the displacement  vector $\bm u$. Also, $\bm \sigma$ is the stress tensor,
 $S= 1/M$,  $M$ is the Biot modulus, $k$ is the permeability, $\nu$ is the fluid viscosity, $\alpha$ is the Biot-Willis fluid/solid coupling coefficient  and $f$ is a source term representing injection or production processes. Body forces are neglected, $\bm n$ is the unit normal to the boundary.
 
The stress tensor is given by
\begin{equation}
\label{eq:stress}
\bm \sigma = 2 \mu \bm \varepsilon(\bm u) + \lambda \div(\bm u) \, \bm I,
\end{equation}
where $\bm \varepsilon$ is the strain tensor:
\[
\bm \varepsilon(\bm u) = \frac{1}{2} \left( \grad \bm u + \grad \bm u^T \right),
\]
and $\mu$, $\lambda$ are Lame coefficients, $\bm I$ is the identity tensor.

In the case of thermoelasticity, the governing equations are the same as equations (\ref{eq:poroelas}) \cite{biot1956thermoelasticity, plas_lub, simocomputational, nowacki1975dynamic} with the temperature $T$ instead of the pressure:
\begin{equation}
\label{eq:thermo}
\begin{split}
 \div \bm \sigma(\bm u) -\beta \grad (T) & = 0,\\
 \beta T_0 \frac{\partial \div \bm u}{\partial t} + c \frac{\partial T}{\partial t}  - \div \left(\kappa \grad T \right) & = 0, 
\end{split}
\end{equation}
where $c$ is the heat capacity of the unit volume in the absence of deformation, $\kappa$ is the thermal conductivity and $\beta$ is the  coupling coefficient playing the similar role as the Biot-Willis coefficient $\alpha$. Here $T_0$ is the initial temperature of a medium.

\section{Differential problem properties}

We define the standard Hilbert space $H=L_2\left(\Omega\right)$ for the pressure with the following inner product and norm:
\[
(u, v) = \int_{\Omega} u(\bm x) \, v(\bm x) \, dx, \quad ||u|| = (u, u)^{1/2}
\]
and the Hilbert space $\bm H= \left( L_2 \left(\Omega \right) \right)^d$ for the displacement.  
Here $d = 2,3$ is the number of spatial dimensions. 

In $H$, we consider (see, e.g., $\mathcal{A}$ \cite{gaspar2003finite, lisbona2001operator, samarskii1978methods}) the operator
\begin{equation}
\label{eq:Au}
\mathcal{A} \bm v = -\mu \nabla^2 \bm v - (\lambda + \mu) \grad \div \bm v.
\end{equation}
The operator $\mathcal{A}$ is positive and self-adjoint in $\bm H$:
\[
(\mathcal{A} \bm v, \bm v) \geq 0, \quad (\mathcal{A} \bm v, \bm u) = (\bm v, \mathcal{A} \bm u).
\]

Similarly, in $H$, we define the operator $\mathcal{B}$ as follows:
\begin{equation}
\label{eq:Bu}
\mathcal{B} p = -\div \left( \frac{k}{\eta} \grad p \right).
\end{equation}
The coupling terms in the poroelasticity problem are associated with the gradient and divergence operators denoted by $\mathcal{G}$ and $\mathcal{D}$:
\begin{equation}
\label{eq:grad_div}
\left( \mathcal{G} p, \bm u \right) = -\left( \mathcal{D} \bm u, p \right).
\end{equation}

Now equations (\ref{eq:poroelas}) may be written in the operator-differential  form as an abstract initial value problem:
\begin{equation}
\label{eq:mmd}
\begin{split}
\mathcal{A} \bm u + \alpha \mathcal{G}p  & = 0,  \\
\frac{d}{dt} \left( S \, p + \alpha \mathcal{D} \bm u \right) + \mathcal{B} p &= f(t),
\quad t > 0, 
\end{split}
\end{equation}
with the initial condition for the pressure:
 \[
 \quad p(0) = p_0.
 \]
Let us define the inner products associated with $\mathcal{A}$ and $\mathcal{B}$:
\[
(u, v)_{\mathcal{A}} = \left( \mathcal{A} u, v\right), \quad (u, v)_{\mathcal{B}} = \left( \mathcal{B} u, v\right)
\]
and the norms: 
\[
||u||_{\mathcal{A}} = (\mathcal{A} u, u)^{1/2}, \quad 
||u||_{\mathcal{B}} = (\mathcal{B} u, u)^{1/2}.
\]

\begin{theorem}
\label{thm:apriopi_dfc}
The solution of problem (\ref{eq:mmd})  satisfies the a priory estimate
\begin{equation}
\label{eq:esOp}
||\bm u(t)||^2_{\mathcal{A}} + S||p(t)||^2 \leq ||\bm u(0)||^2_{\mathcal{A}} + S||p(0)||^2 + \frac{1}{2}\int^t_0 ||f(s)||^2_{\mathcal{B}^{-1}} ds.
\end{equation}
\end{theorem}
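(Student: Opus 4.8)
The plan is to carry out the standard energy (a priori) estimate for coupled elliptic--parabolic systems, exploiting the adjointness identity (\ref{eq:grad_div}) together with the self-adjointness and positivity of $\mathcal{A}$ and $\mathcal{B}$. The decisive structural feature is that the coupling contributions produced by the two equations are equal and opposite, so they cancel when the equations are tested against a suitable pair of functions; once this cancellation is in hand, the estimate closes by Cauchy--Schwarz and Young's inequality.

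First I would test the quasi-static equation $\mathcal{A}\bm u + \alpha\mathcal{G}p = 0$ with the time derivative $d\bm u/dt$. Since $\mathcal{A}$ is self-adjoint, positive, and time-independent, $(\mathcal{A}\bm u, d\bm u/dt) = \tfrac{1}{2}\frac{d}{dt}\|\bm u\|_{\mathcal{A}}^2$, while by (\ref{eq:grad_div}) the coupling term rewrites as $\alpha(\mathcal{G}p, d\bm u/dt) = -\alpha(\mathcal{D}\,d\bm u/dt, p)$, yielding the identity $\tfrac{1}{2}\frac{d}{dt}\|\bm u\|_{\mathcal{A}}^2 - \alpha(\mathcal{D}\,d\bm u/dt, p) = 0$. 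Next I would test the evolution equation with $p$: the term $\frac{d}{dt}(Sp + \alpha\mathcal{D}\bm u)$ paired with $p$ contributes $\tfrac{S}{2}\frac{d}{dt}\|p\|^2 + \alpha(\mathcal{D}\,d\bm u/dt, p)$, and $(\mathcal{B}p, p) = \|p\|_{\mathcal{B}}^2$. Adding the two relations, the mixed terms $\mp\,\alpha(\mathcal{D}\,d\bm u/dt, p)$ cancel exactly and leave
\[
\tfrac{1}{2}\frac{d}{dt}\left( \|\bm u\|_{\mathcal{A}}^2 + S\|p\|^2 \right) + \|p\|_{\mathcal{B}}^2 = (f, p).
\]

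To close, I would bound the forcing in the $\mathcal{B}$-weighted pairing by Cauchy--Schwarz and then Young's inequality, $(f, p) \leq \|f\|_{\mathcal{B}^{-1}}\,\|p\|_{\mathcal{B}} \leq \tfrac{1}{2}\|f\|_{\mathcal{B}^{-1}}^2 + \tfrac{1}{2}\|p\|_{\mathcal{B}}^2$, absorbing the $\|p\|_{\mathcal{B}}^2$ contribution into the dissipative term on the left. Dropping the remaining nonnegative $\mathcal{B}$-term and integrating from $0$ to $t$ then produces the estimate (\ref{eq:esOp}), with the constant fixed by the Young parameter; the initial datum $\|\bm u(0)\|_{\mathcal{A}}$ is meaningful since the constraint determines $\bm u(0)$ from $p_0$.

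\textbf{Main obstacle.} The delicate point is the legitimacy of testing the first equation with $d\bm u/dt$, since that equation is only a quasi-static constraint and carries no explicit time derivative of $\bm u$; I must first argue that $\bm u$ has the requisite regularity in time. Because $\mathcal{A}$ is positive and hence invertible, the constraint gives $\bm u = -\alpha\,\mathcal{A}^{-1}\mathcal{G}p$, so $\bm u$ inherits differentiability in time from $p$ and the quantity $\|\bm u(0)\|_{\mathcal{A}}$ is well defined through $p(0)=p_0$. Granting this, the exact cancellation of the coupling terms is the crux of the argument; the remaining steps are routine.
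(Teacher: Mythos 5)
Your proposal follows essentially the same route as the paper: test the quasi-static equation with $d\bm u/dt$ and the evolution equation with $p$, cancel the coupling terms via (\ref{eq:grad_div}), bound the forcing by Young's inequality, and integrate in time. The cancellation argument, the energy identity
\[
\tfrac{1}{2}\tfrac{d}{dt}\left( \|\bm u\|_{\mathcal{A}}^2 + S\|p\|^2 \right) + \|p\|_{\mathcal{B}}^2 = (f, p),
\]
and your remark on recovering $\bm u = -\alpha\,\mathcal{A}^{-1}\mathcal{G}p$ to justify time differentiation are all sound (the last point is in fact more careful than the paper, which differentiates formally).

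One quantitative slip, however: with your Young split $(f,p) \leq \tfrac{1}{2}\|f\|_{\mathcal{B}^{-1}}^2 + \tfrac{1}{2}\|p\|_{\mathcal{B}}^2$ you absorb only half of the dissipative term, and after dropping the remaining $\tfrac{1}{2}\|p\|_{\mathcal{B}}^2$ you are left with $\tfrac{d}{dt}\left(\|\bm u\|_{\mathcal{A}}^2 + S\|p\|^2\right) \leq \|f\|_{\mathcal{B}^{-1}}^2$, which integrates to the estimate with constant $1$ on the forcing integral, not the stated $\tfrac{1}{2}$. To land exactly on (\ref{eq:esOp}), use the full dissipation for absorption, i.e.\ $(f,p) \leq \|p\|_{\mathcal{B}}^2 + \tfrac{1}{4}\|f\|_{\mathcal{B}^{-1}}^2$ (Young with $\varepsilon = 1$, as the paper does); then $\tfrac{1}{2}\tfrac{d}{dt}\left(\|\bm u\|_{\mathcal{A}}^2 + S\|p\|^2\right) \leq \tfrac{1}{4}\|f\|_{\mathcal{B}^{-1}}^2$, and integration gives precisely the factor $\tfrac{1}{2}$ in (\ref{eq:esOp}). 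This is a one-line fix, but as written your proof establishes a slightly weaker inequality than the theorem claims.
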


\begin{proof}
Multiplying the first and the second equations of (\ref{eq:mmd})  by $d \bm u /dt$ and $p$, respectively, we get
\[
\left( \mathcal{A} \bm u, \frac{d \bm u}{dt} \right) + \alpha \left( \mathcal{G} p, \frac{d \bm u}{dt} \right) = 0,
\]
\[
\frac{d}{dt} \left( S( p, p) + \alpha \left( \mathcal{D} \bm u, p \right) \right) +  \left( \mathcal{B} p, p \right) = (f, p).
\]
Adding this equations and using the relation (\ref{eq:grad_div}), we obtain
\[
\left( \mathcal{A} \bm u, \frac{d \bm u}{dt} \right) + S\left(p, \frac{dp}{dt} \right) +  \left( \mathcal{B} p, p \right) = (f, p).
\]
Using the Cauchy-Schwarz inequality for the right-hand side:
\[
(f, p) \leq ||p||^2_{\mathcal{B}} + \frac{1}{4} ||f||^2_{\mathcal{B}^{-1}},
\]
the following inequality is obtained:
\[
\left( \mathcal{A} \bm u, \frac{d \bm u}{dt} \right) +  S\left(p, \frac{dp}{dt} \right)  \leq  \frac{1}{4} ||f||^2_{\mathcal{B}^{-1}}.
\]
In view of 
\[
\left( \frac{d u}{dt}, u \right) = \frac{1}{2} \frac{d}{dt} (u, u),
\]
we have
\[
\frac{d}{dt} \left( 
(\mathcal{A} \bm u, \bm u) + S(p, p)
\right)
\leq  \frac{1}{2} ||f||^2_{\mathcal{B}^{-1}}.
\]
Time integration gives  the a priory estimate (\ref{eq:esOp}) providing stability with respect to the initial data and the right-hand side.
\end{proof}

After differentiation with respect to time of the displacement equation, the operator-differential  form of poroelasticity problem (\ref{eq:mmd}) may be written in a more convenient form:
\begin{equation}
\label{eq:mm2}
\begin{split}
\mathcal{A} \frac{d \bm u}{d t} + \alpha \mathcal{G} \frac{d p}{d t} & = 0,  \\
\frac{d}{dt} \left( S \, p + \alpha \mathcal{D} \bm u \right) + \mathcal{B} p & = f(t),
\quad t > o, 
\end{split}
\end{equation}
which is an initial value problem for a system of linear ordinary differential equations. 

Let $\bm U = \{\bm u, p\}$ be the vector of unknowns, $\bm F = \{\bm 0, f\}$ be the given vector of the right-hand sides and
\begin{equation}
\label{eq:ODE}
\mathbb{B} \frac{d \bm U}{dt} + \mathbb{A} \bm U = \bm F, \quad t > 0,
\end{equation}
with the initial conditions
\[
\bm U = \bm U_0.
\]
Here 
\begin{equation}
\label{eq:odeB1}
\mathbb{B} = 
\begin{pmatrix}
\mathcal{A} & \alpha \mathcal{G}\\ 
\alpha \mathcal{D} & S \mathcal{I}
\end{pmatrix}, 
\quad
\mathbb{A} = 
\begin{pmatrix}
0 & 0\\ 
0 & \mathcal{B}
\end{pmatrix}.
\end{equation}

Multiplying equation (\ref{eq:ODE}) by $\bm U$, we obtain
\[
\frac{1}{2} \frac{d}{dt} \left( \mathbb{B} \bm U, \bm U \right) + \left( \mathbb{A} \bm U, \bm U \right)  = \left( \bm F, \bm U \right).
\]
For the right-hand side, we use the estimate
\[
\left( \bm F, \bm U \right) \leq  \left( \mathbb{A} \bm U, \bm U \right) + \frac{1}{4}  \left( \mathbb{A}^{-1} \bm F, \bm F \right).
\]
This provides the a priory estimate for the solution:
\begin{equation}
\label{eq:es2}
||\bm U(t)||_{\mathbb{B}}^2 \leq ||\bm U_0||_{\mathbb{B}}^2 + \frac{1}{2} \int^t_0 ||\bm F(s)||^2_{\mathbb{A}^{-1}} ds.
\end{equation}
Note that this estimate is similar to the a priory estimate (\ref{eq:esOp}), but it is obtained in a more simple way.

\section{Discretization in space and in time}

For the numerical solution of the problem, first, we come to a variational problem by multiplying the first and  
the second equations of (\ref{eq:poroelas}) by test functions $\bm v$ and $q$, respectively, and  
integrating by parts to eliminate the second order derivatives. 
Find $\bm u \in \bm V$, $p \in Q$ such that
\begin{equation}
\label{eq:var}
\begin{split}
\int_{\Omega} \bm \sigma(\bm u) \, \bm \varepsilon(\bm v) dx  
& +  \int_{\Omega}\alpha (\grad p, \bm v) dx = 0, \quad \forall \bm v \in \hat{ \bm V} ,\\
\int_{\Omega} \alpha  \frac{d \div \bm u}{dt} q dx  & + 
\int_{\Omega} S \frac{d p}{dt} q dx + 
\int_{\Omega}  \left( \frac{k}{\nu} \grad p, \grad q\right) dx \\
& = \int_{\Omega} f \, q \,dx, \quad \forall q\in \hat{ Q}.
\end{split}
\end{equation}
Here the spaces of trial and test functions are defined as
\begin{align*}
  \bm V  & = \lbrace \bm v \in [H^1(\Omega)]^d: \bm v(\bm x) = 0, \bm x \in \Gamma_D^u  \rbrace, \\
  \hat{\bm V} & = \lbrace \bm v \in [H^1(\Omega)]^d: \bm v(\bm x) = 0, \bm x \in \partial \Omega \rbrace, \\
  Q  & = \lbrace  q \in H^1(\Omega):  q(\bm x) = p_1, \bm x \in \Gamma_D^p \rbrace, \\
  \hat{Q} & = \lbrace  q \in H^1(\Omega): q(\bm x) = 0, \bm x \in \partial \Omega \rbrace,
\end{align*}
where  $H^1$ is a Sobolev space.

A discrete problem is obtained by restricting the variational problem (\ref{eq:var}) to pairs of discrete  spaces of trial and test functions \cite{haga2012causes, logg2012automated}: find $\bm u_h \in \bm V_h \subset \bm V$, $p_h \in Q_h \subset Q$ such that  
\begin{equation}
\label{eq:fem}
\begin{split}
\int_{\Omega} \bm \sigma(\bm u_h) \, \bm \varepsilon(\bm v_h) dx  
& +  \int_{\Omega}\alpha (\grad p_h,  \bm v_h) dx = 0, \quad \forall \bm v_h \in \hat{ \bm V_h} \subset \hat{ \bm V} ,\\
\int_{\Omega} \alpha  \frac{d \div \bm u_h}{dt} q_h dx  & + 
\int_{\Omega} S \frac{d p_h}{dt} q_h dx + 
\int_{\Omega}   \left( \frac{k}{\nu} \grad p_h,\grad q_h \right) dx  \\ & = 
\int_{\Omega} f \, q_h \,dx, \qquad \quad \forall q_h \in \hat{ Q_h} \subset \hat{ Q}.
\end{split}
\end{equation}

For discretization in time, we employ the weighted difference scheme for time-stepping.
We define the following bilinear and linear forms over the domain $\Omega$:
\[
\begin{split}
a(\bm u, \bm v)  &= \int_{\Omega} \bm \sigma(\bm u) \, \bm \varepsilon(\bm v) dx, \\
g(p, \bm v)  &= \int_{\Omega}\alpha (\grad p, \bm v) dx,
\\
c(p, q) &= \int_{\Omega} S \, p \, q \, dx, 
\\
d(\bm u, q) &= \int_{\Omega} \alpha \div \bm u \, q \, dx , 
\\
b(p, q) &= \int_{\Omega} \left( \frac{k}{\nu} \grad p, \grad q \right) dx  , 
\\
l(f, q) &= \int_{\Omega} f  \, q \, dx,
\end{split}
\]
Note that for all $u$ and $v$, we have
\[
d(u, v) = -g(v, u)
\]

Let $\bm u^n = \bm u(\bm x, t_n)$,  $p^n = p(\bm x, t_n)$, where $t_n = n \tau$, $n = 0, 1, ..., $ and $\tau > 0$.
Then the problem is reformulated as follows: we search $p \in Q$ and $\bm u \in \bm V$ that satisfy the following relations:

for the displacement
\begin{equation}
\label{eq:canu}
a(\bm u^{n+1}, \bm v) + g(p^{n+1}, \bm v) = 0, \quad \forall \bm v \in \hat{\bm V}, 
\end{equation}

for the pressure
\begin{equation}
\label{eq:canp}
\begin{split}
   d \left( \frac{\bm u^{n+1} - \bm u^n}{\tau}, q \right)  
+ &c \left( \frac{p^{n+1} - p^n}{\tau}, q \right) \\
+ &b(p_{\theta}^{n+1} , q) = l(f_{\theta}^{n+1} , q), \quad \forall q \in \hat Q,
\end{split}
\end{equation}
with 
\[
p_{\theta}^{n+1} = \theta p^{n+1}+(1-\theta) p^n,
\]
\[
f_{\theta}^{n+1} = f(\bm x, \theta t^{n+1}+(1-\theta) t^n)
\]
and $0 \leq \theta \leq 1$.

\begin{theorem}
For $\theta \geq 0.5$, the solution of the weighted difference scheme (\ref{eq:canu})-(\ref{eq:canp}) satisfies the a priory estimate
\begin{equation}
\label{eq:es}
||\bm u^{n+1}||_a^2 + ||p^{n+1}||_c^2 \leq  ||\bm u^{n}||_a^2 + ||p^{n}||_c^2 + \frac{\tau}{2} ||f_{\theta}^{n+1} ||_{*, b}^2
\end{equation}
\end{theorem}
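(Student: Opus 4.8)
The plan is to reproduce, at the discrete level, the energy argument that established Theorem~\ref{thm:apriopi_dfc}, choosing the test functions so that the coupling terms cancel exactly. The decisive preliminary step is to note that the displacement relation (\ref{eq:canu}) is a constraint imposed at \emph{every} time level, so it holds both at $t_{n+1}$ and at $t_n$; forming the convex combination of the two with weights $\theta$ and $1-\theta$ gives
\[
a(\bm u_{\theta}^{n+1}, \bm v) + g(p_{\theta}^{n+1}, \bm v) = 0, \quad \forall \bm v \in \hat{\bm V},
\]
where $\bm u_{\theta}^{n+1} = \theta \bm u^{n+1} + (1-\theta) \bm u^n$. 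This is precisely the device that makes the pressure weight $p_{\theta}^{n+1}$ reappear on the displacement side.

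First I would test this $\theta$-averaged displacement equation with the increment $\bm v = \bm u^{n+1} - \bm u^n$, and test the pressure equation (\ref{eq:canp}), after multiplying by $\tau$, with $q = p_{\theta}^{n+1}$. The displacement test yields $a(\bm u_{\theta}^{n+1}, \bm u^{n+1} - \bm u^n) = -g(p_{\theta}^{n+1}, \bm u^{n+1} - \bm u^n)$, while the relation $d(\bm u, \bm v) = -g(\bm v, \bm u)$ turns the coupling term $d(\bm u^{n+1} - \bm u^n, p_{\theta}^{n+1})$ in the pressure equation into exactly the same quantity. Substituting and collecting terms leaves the single scalar identity
\[
a(\bm u_{\theta}^{n+1}, \bm u^{n+1} - \bm u^n) + c(p^{n+1} - p^n, p_{\theta}^{n+1}) + \tau \, b(p_{\theta}^{n+1}, p_{\theta}^{n+1}) = \tau \, l(f_{\theta}^{n+1}, p_{\theta}^{n+1}),
\]
with the coupling contributions eliminated.

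Next I would rewrite the first two terms via the symmetry of $a$ and $c$ and the elementary identity $w(\phi_{\theta}, \phi^{n+1} - \phi^n) = \tfrac{1}{2}\left(||\phi^{n+1}||_w^2 - ||\phi^n||_w^2\right) + (\theta - \tfrac{1}{2})\,||\phi^{n+1} - \phi^n||_w^2$, valid for any symmetric form $w$ with $\phi_{\theta} = \theta \phi^{n+1} + (1-\theta)\phi^n$. Applied with $w = a$, $\phi = \bm u$ and with $w = c$, $\phi = p$, this produces the telescoping differences $\tfrac{1}{2}(||\bm u^{n+1}||_a^2 - ||\bm u^n||_a^2)$ and $\tfrac{1}{2}(||p^{n+1}||_c^2 - ||p^n||_c^2)$, together with the two remainders $(\theta - \tfrac{1}{2})||\bm u^{n+1} - \bm u^n||_a^2$ and $(\theta - \tfrac{1}{2})||p^{n+1} - p^n||_c^2$. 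It is exactly here that the hypothesis $\theta \geq 1/2$ is used: both remainders are then nonnegative and may be discarded to pass from the identity to an inequality.

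Finally, I would bound the right-hand side by the Cauchy--Schwarz/Young inequality $l(f_{\theta}^{n+1}, p_{\theta}^{n+1}) \leq ||p_{\theta}^{n+1}||_b^2 + \tfrac{1}{4}||f_{\theta}^{n+1}||_{*,b}^2$, exactly as in the proof of Theorem~\ref{thm:apriopi_dfc}; the term $\tau\,||p_{\theta}^{n+1}||_b^2$ then cancels against the diffusion term $\tau\,b(p_{\theta}^{n+1}, p_{\theta}^{n+1})$ on the left, and multiplying through by two yields estimate (\ref{eq:es}). The main obstacle, and the only genuinely nonroutine point, is the coupling cancellation: it succeeds only because one tests the $\theta$-averaged displacement equation (rather than the relation at $t_{n+1}$ alone) with the displacement increment, so that the same weighted pressure $p_{\theta}^{n+1}$ is matched on both sides and the two coupling terms annihilate. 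Everything after that is the standard energy bookkeeping of weighted schemes.
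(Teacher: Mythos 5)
Your proposal is correct and follows essentially the same route as the paper's proof: test the $\theta$-averaged displacement equation with the displacement increment, test the pressure equation with $p_{\theta}^{n+1}$, cancel the coupling via $d(u,v)=-g(v,u)$, apply the weighted-average identity for symmetric forms, drop the $(\theta-\tfrac12)$ remainders using $\theta\geq 0.5$, and absorb the source term with Young's inequality against the diffusion term. Your only addition is to make explicit the step the paper leaves implicit, namely that the stationary displacement constraint holds at both time levels and can therefore be convexly combined to produce the $\theta$-weighted relation.
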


\begin{proof}
Let 
\[
\bm u_{\theta}^{n+1} = \theta \bm u^{n+1}+(1-\theta) \bm u^n,
\]
\[
v = \frac{\bm u^{n+1} - \bm u^n}{\tau}
\]
in  the equation for the displacement (\ref{eq:canu}), then we have 
\begin{equation}
\label{eq:proof-canu}
a \left(\bm u_{\theta}^{n+1}, \frac{\bm u^{n+1} - \bm u^n}{\tau}\right) + g \left(p_{\theta}^{n+1}, \frac{\bm u^{n+1} - \bm u^n}{\tau}\right) = 0.
\end{equation}
By analogy, substituting $q = p_{\theta}^{n+1}$ in the pressure equation (\ref{eq:canp}), we obtain
\begin{equation}
\label{eq:proof-canp}
\begin{split}
    c \left( \frac{p^{n+1} - p^n}{\tau},  p_{\theta}^{n+1} \right) 
+ &d \left( \frac{\bm u^{n+1} - \bm u^n}{\tau},  p_{\theta}^{n+1} \right) \\
+ &b(p_{\theta}^{n+1} ,  p_{\theta}^{n+1}) = (f_{\theta}^{n+1} ,  p_{\theta}^{n+1}).
\end{split}
\end{equation}

Adding (\ref{eq:proof-canu}) and (\ref{eq:proof-canp}), we get
\[
    a \left(\bm u_{\theta}^{n+1}, \frac{\bm u^{n+1} - \bm u^n}{\tau} \right)
+ c \left( \frac{p^{n+1} - p^n}{\tau},  p_{\theta}^{n+1} \right) 
+ b(p_{\theta}^{n+1} ,  p_{\theta}^{n+1}) = (f_{\theta}^{n+1} ,  p_{\theta}^{n+1}).
\]
Taking into account the inequality 
\[
( f_{\theta}^{n+1}, p_{\theta}^{n+1}) \leq \varepsilon ||p_{\theta}^{n+1}||_b^2 + \frac{1}{4} ||f_{\theta}^{n+1}||_{*, b}^2
\]
then for $\varepsilon = 1$
\[
    a \left(\bm u_{\theta}^{n+1}, \frac{\bm u^{n+1} - \bm u^n}{\tau} \right)
+ c \left( \frac{p^{n+1} - p^n}{\tau},  p_{\theta}^{n+1} \right) 
\leq \frac{1}{4} ||f_{\theta}^{n+1}||_{*, b}^2 .
\]
Here the right-hand side is estimated in the norm for the space adjoint to $H_b$  using symbols $||v||_{*, b}$.

Using the identity 
\[
  v^{n+1}_{\theta} = \theta v^{n+1} + (1-\theta) v^n = \frac{ v^{n+1} +  v^n}{2} + \left(\theta -\frac{1}{2}\right) ( v^{n+1} -  v^n ),
\]
and employing that for symmetric bilinear form $d(u,  v) = d(v,  u)$ it is satisfied:
\[
d(u+v, u-v) = d(u, u) - d(v, v),
\]
we obtain the inequality
\[
\begin{split}
\frac{1}{2} \left( ||u^{n+1}||^2_a - ||u^n||^2_a \right) & + 
\frac{1}{2} \left( ||p^{n+1}||^2_c - ||p^n||^2_c \right) \\
& + \left(\theta -\frac{1}{2}\right) \left( ||u^{n+1} - u^n||^2_a  + ||p^{n+1} - p^n||^2_c \right)
\leq \frac{\tau}{4} ||f_{\theta}^{n+1}||_{*, b}^2 .
\end{split}
\]
If $\theta \geq 0.5$, then the estimate (\ref{eq:es}) holds; this ensures stability with respect to the initial data and the right-hand side.
\end{proof}

\section{Additive schemes}

Let $H$ be a finite-dimensional Hilbert space, and $\mathbb{B}, \mathbb{A}$ are linear operators in $H$.
We introduce a space $H_\mathcal{D}$ with the inner product and norm:
\[
(y, w)_{\mathcal{D}} = \left( \mathcal{D} y, w \right), \quad ||y||_{\mathcal{D}} = \left( \mathcal{D} y, w \right)^{1/2}.
\]
We consider a initial value problem for a system of linear ordinary differential equations: find $\bm U(t) \in H$ such that
\begin{equation}
\label{eq:IVP}
\mathbb{B} \frac{d \bm U}{dt} + \mathbb{A} \bm U = \bm F, \quad t > 0,
\end{equation}
\begin{equation}
\label{eq:odeB}
\mathbb{B} = 
\begin{pmatrix}
\mathcal{A} & \alpha \mathcal{G}\\ 
\alpha \mathcal{D} & S \mathcal{I}
\end{pmatrix}, 
\quad
\mathbb{A} = 
\begin{pmatrix}
0 & 0\\ 
0 & \mathcal{B}
\end{pmatrix},
\end{equation}
with the initial conditions
\begin{equation}
\label{eq:icIVP}
\bm U = \bm U_0.
\end{equation}
For the problem (\ref{eq:IVP})-(\ref{eq:icIVP}), we have the a priory estimate (\ref{eq:es2}), which expresses the stability of the solution with respect to the initial data and the right-hand side.

In our problem, the computational complexity is associated with the operator $\mathbb{B}$ at the time derivative. In this case, to decrease the computational complexity of the problem (\ref{eq:IVP})-(\ref{eq:icIVP}), we employ the additive representation:
\begin{equation}
\label{eq:decB}
\mathbb{B} = \mathbb{B}_0 + \mathbb{B}_1,
\end{equation}
where we take $\mathbb{B}_0$ as an easily invertible operator; this can help us to decouple the problem. 

Splitting schemes for the approximate solution of (\ref{eq:IVP})-(\ref{eq:icIVP}) will be constructed on the basis of the weighted difference schemes.
The standard two-level weighted scheme for the problem (\ref{eq:IVP})-(\ref{eq:icIVP}) has the following form:
\begin{equation}
\label{eq:sc-IVP}
\mathbb{B} \frac{\bm U^{n+1} - \bm U^n}{\tau} + \mathbb{A} \left( \theta \bm U^{n+1} + (1-\theta)\bm U^n \right) = \bm F^n, \quad n=0, 1, ...
\end{equation}
where, for example,
\[
\bm F^n = \bm F(\theta t^{n+1} + (1-\theta)t^n),
\]
and $\theta$ is the weight parameter.

To solve the problem (\ref{eq:IVP})-(\ref{eq:icIVP}) with the additive operator $\mathbb{B}$, we apply the following difference scheme:
\begin{equation}
\label{eq:DLU}
\begin{split}
\mathbb{B}_0 \frac{\bm U^{n+1} - \bm U^n}{\tau} 
& + \mathbb{B}_1 \frac{\bm U^n - \bm U^{n-1}}{\tau}  \\
&+ \mathbb{A} 
\left( \theta_1 \bm U^{n+1} 
+ (1-\theta_1 - \theta_2)\bm U^n + \theta_2 \bm U^{n-1}  
\right) = \bm F^n, \\
 & \quad n=0, 1, ...
\end{split}
\end{equation}
Unlike (\ref{eq:sc-IVP}), the scheme (\ref{eq:DLU}) is a three-level scheme with two weight factors $\theta_1$ and $\theta_2$. 

As the operators $\mathbb{B}_0$, $\mathbb{B}_1$ in (\ref{eq:DLU}), we can take the following representations:
\begin{equation}
\label{eq:sc-D}
\mathbb{B}_0 = 
\begin{pmatrix}
\mathcal{A} & 0\\ 
0 & S \mathcal{I}
\end{pmatrix}, 
\quad
\mathbb{B}_1 = 
\begin{pmatrix}
0 & \alpha \mathcal{G}\\ 
\alpha \mathcal{D} & 0
\end{pmatrix}, 
\end{equation}
where the diagonal part of the operator $\mathbb{B}$ is separated. 
In terms of numerical implementation, it is convenient to use the triangular splitting. 
The first case has the form:  
\begin{equation}
\label{eq:sc-L}
\mathbb{B}_0 = 
\begin{pmatrix}
\mathcal{A} & 0\\ 
\alpha \mathcal{D} & S \mathcal{I}
\end{pmatrix}, 
\quad
\mathbb{B}_1 = 
\begin{pmatrix}
0 & \alpha \mathcal{G}\\ 
0 & 0
\end{pmatrix}, 
\end{equation}
and the second one is represented as: 
\begin{equation}
\label{eq:sc-U}
\mathbb{B}_0 = 
\begin{pmatrix}
\mathcal{A} & \alpha \mathcal{G}\\ 
0 & S \mathcal{I}
\end{pmatrix}, 
\quad
\mathbb{B}_1 = 
\begin{pmatrix}
0 & 0\\ 
\alpha \mathcal{D} & 0
\end{pmatrix} .
\end{equation}
In each of these decompositions, we have the part $\mathbb{B}_0$ that is easily invertible.

\section{Regularized schemes}

Now we consider some modified techniques, which often discussed in the literature \cite{kim2010sequential, mikelic2013convergence, armero1992new, jha2007locally}.
The undrained split method consists in imposing a constant fluid mass during the structure deformation \cite{kim2010sequential, mikelic2013convergence}. We set
\[
p^{n+1} = p^n - \frac{\alpha}{S} \mathcal{D} (\bm u^{n+1} - \bm u^n),
\]
then we substitute this into the displacement equation:
\[
\mathcal{A} \bm u^{n+1} + \alpha \mathcal{G} p^n - \tau \frac{\alpha^2}{S} \mathcal{G} \mathcal{D} \frac{\bm u^{n+1} - \bm u^n}{\tau} = 0 .
\]
The equations become as follows:
\begin{equation}
\label{eq:undr}
\begin{split}
\mathcal{A} \bm u^{n+1} + \alpha \mathcal{G} p^n - \tau \frac{\alpha^2}{S} \mathcal{G} \mathcal{D} \frac{\bm u^{n+1} - \bm u^n}{\tau} & = 0, \\
 S \frac{p^{n+1} - p^n}{\tau} + \alpha \mathcal{D} \frac{\bm u^{n+1} - \bm u^n}{\tau} + \mathcal{B} p^{n+1} & = f^n.
\end{split}
\end{equation}
The displacement equation in (\ref{eq:undr}) can be generalized as a regularized scheme
(see, e.g., \cite{vabAdd,samarskii1967regularization}):
\begin{equation}
\label{eq:RL}
\mathcal{A} \bm u^{n+1} + \alpha \mathcal{G} p^n + \beta \tau \mathcal{R} \frac{\bm u^{n+1} - \bm u^n}{\tau} = 0,
\end{equation}
where, for the undrained split, we have
\[
\mathcal{R}  = -\frac{\mathcal{G} \mathcal{D}}{S}, \quad \beta = \alpha^2.
\]

The fixed stress split method consist in imposing constant volumetric mean total stress \cite{kim2010sequential, mikelic2013convergence}. We set
\[
\mathcal{D} \bm u^{n+1} = \mathcal{D} \bm u^n +\alpha \frac{1}{K_{dr}} (p^{n+1} - p^n),
\]
where $K_{dr}$ is the constrained (drained) modulus:
\[
K_{dr} = \frac{E (1- \nu)}{(1-2 \nu) (1+\nu)}.
\]
Substitution into the pressure equation leads to
\[
 S \frac{p^{n+1} - p^n}{\tau} + \alpha \mathcal{D} \frac{\bm u^{n} - \bm u^{n-1}}{\tau}  +\left( \frac{\alpha^2}{K_{dr}} \frac{p^{n+1} - p^n}{\tau} - \frac{\alpha^2}{K_{dr}} \frac{p^n - p^{n-1}}{\tau} \right) + \mathcal{B} p^{n+1} = f^n,
\]
and now we get
\begin{equation}
\label{eq:fixstress}
\begin{split}
\mathcal{A} \bm u^{n+1} &+ \alpha \mathcal{G} p^{n+1} = 0, \\
 S \frac{p^{n+1} - p^n}{\tau} &+ \alpha \mathcal{D} \frac{\bm u^{n} - \bm u^{n-1}}{\tau}  \\
& +\left( \frac{\alpha^2}{K_{dr}} \frac{p^{n+1} - p^n}{\tau} - \frac{\alpha^2}{K_{dr}} \frac{p^n - p^{n-1}}{\tau} \right) + \mathcal{B} p^{n+1} = f^n.
\end{split}
\end{equation}

Similarly, we can generalize the pressure equation in (\ref{eq:fixstress}) as a regularized scheme \cite{vabAdd,samarskii1967regularization}:
\begin{equation}
\label{eq:RU}
 S \frac{p^{n+1} - p^n}{\tau} + \alpha \mathcal{D} \frac{\bm u^{n} - \bm u^{n-1}}{\tau} + \beta \tau \mathcal{R} \frac{p^{n+1} - 2 p^n + p^{n-1}}{\tau^2}  + \mathcal{B} p^{n+1} = f^n,
\end{equation}
where, for the fixed stress split, we have
\[
\mathcal{R}  = \frac{1}{K_{dr}}, \quad \beta = \alpha^2.
\]

\section{Numerical tests for poroelasticity problems}

Let us compare the additive scheme (\ref{eq:DLU}) with the additive operator 
\[
\mathbb{B} = \mathbb{B}_0 + \mathbb{B}_1,
\] 
and  the regularized schemes (\ref{eq:RL}), (\ref{eq:RU}) on the tests, where coefficients are typical for poroelasticity problems.
For numerical implementation, we use \texttt{Gmsh} \cite{geuzaine2009gmsh} for mesh generation and \texttt{Paraview} \cite{henderson2004paraview} for visualization of numerical results.
Our code is based on the library for scientific computations \texttt{FEniCS}
\cite{logg2012automated}. Two test cases have been predicted. 

Figure~\ref{pic:domain} presents the computational domain whereas Figure~\ref{pic:mesh} demonstrates two meshes  used for the numerically silving poroelasticity problem.
Parameters of problem are presented in Table~\ref{tab:2d-param} for Test~1 and Test~2. 
We use the following boundary conditions:
\[
\sigma_{x_1} = 0, \quad \sigma_{x_2} = 0, \quad \bm x \in \Gamma_1,
\]\[
u_{x_1} = 0, \quad \sigma_{x_2} = 0, \quad \bm x \in \Gamma_2,
\]\[
u_{x_1} = 0, \quad u_{x_2} = 0, \quad \bm x \in \Gamma_3.
\]
The time steps $\tau = 0.1$ day and $t_{max} = 3$ days were used on the finest spatial mesh with 40000 cells.  
The pressure field along with the displacement and stress (von Mises) distributions  are depicted in Fig.~\ref{pic:sol} as the upper, middle and lower isocontours, respectively, for the second test case.  

\begin{figure}
\begin{center}
\includegraphics[width=0.8\linewidth]{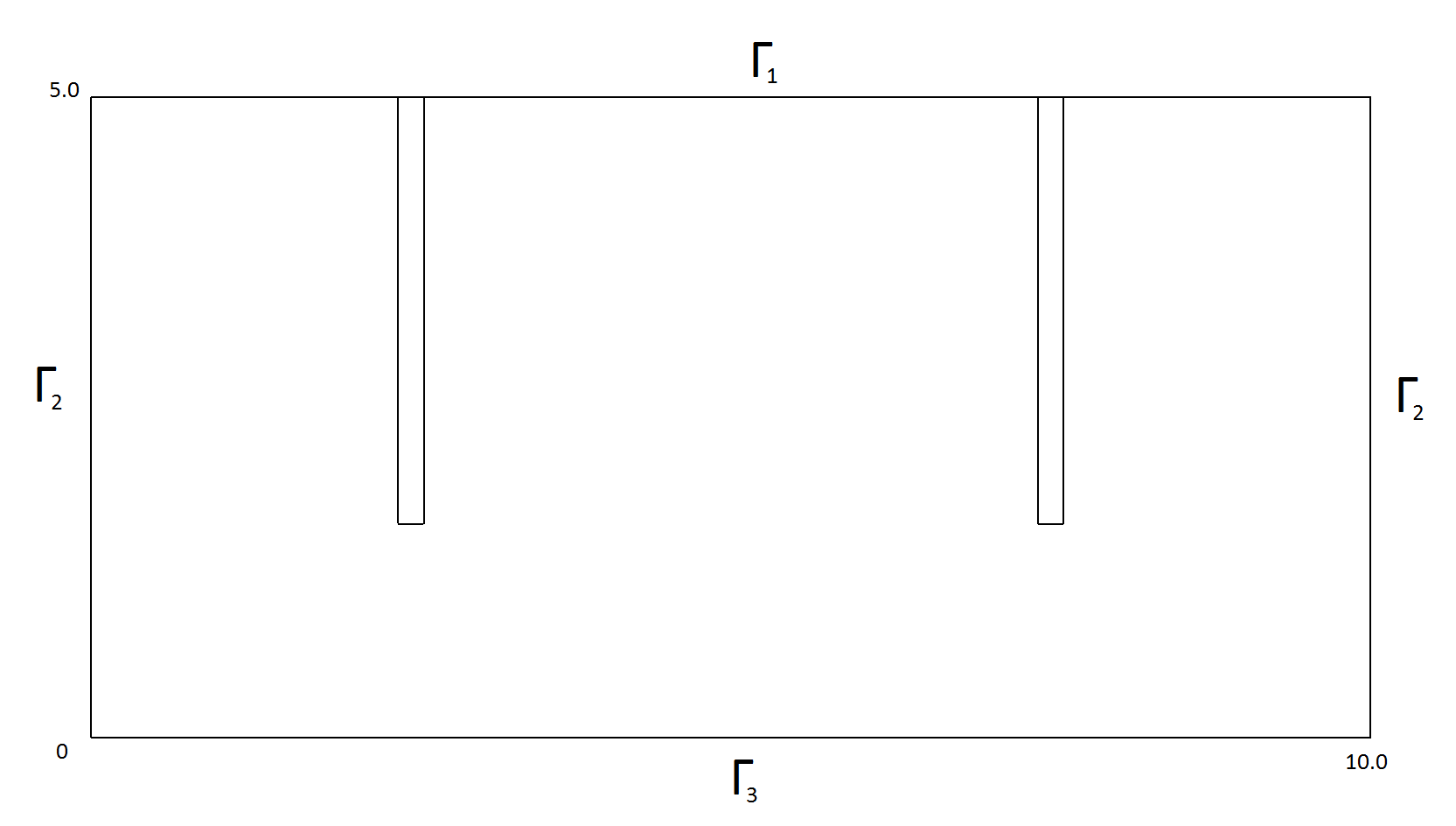}  
\end{center}
\caption{Computational domain}
\label{pic:domain} 
\end{figure}

\begin{table}
\begin{center}
\begin{tabular}{llll}
\hline 
Parameter  & Symbol & Test 1 & Test 2 \\ \hline 
Biot modulus					& $M$   			&  $5.0$  GPa				&  $50.0$  GPa\\
Shear 	modulus				& $\mu$			&  $5.0 $ GPa 				&  $15.0 $ GPa\\
Lame constant					& $\lambda$	&  $5.0$   GPa				&  $10.0$   GPa\\
Absolute permeability		&$k$				&  $10^{-17}$ m$^2$ &  $10^{-18}$ m$^2$\\
Fluid viscosity					& $\nu$			& $0.001$ Pa/sec  		& $0.001$ Pa/sec\\
Initial pressure					& $p_0$			&  $10.0$  MPa 			&  $10.0$  MPa\\
Injector pressure				& $p_i$			&  $10.01$ MPa 			&  $10.01$ MPa\\
Producer pressure			&$p_p$			&  $9.99$	MPa 				&  $9.99$	MPa\\
\hline 
\end{tabular} 
\end{center}
\caption{Problem properties}
\label{tab:2d-param}
\end{table}

\begin{figure}
\begin{center}
\includegraphics[width=0.9\linewidth]{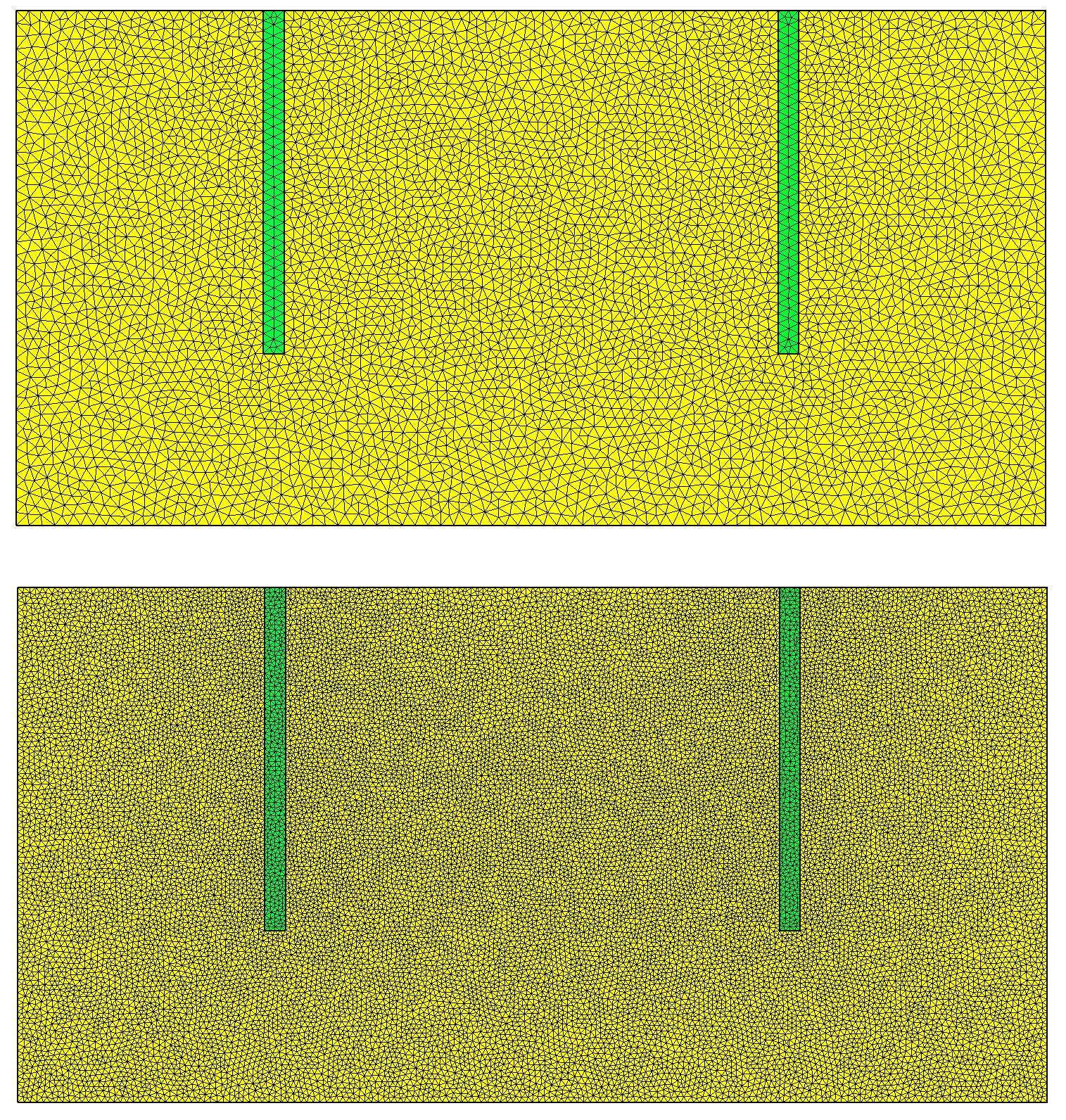}   
\end{center}
\caption{Computational grids (10000 and 40000 cells)}
\label{pic:mesh} 
\end{figure}

\begin{figure}
\begin{center}
\includegraphics[width=0.9\linewidth]{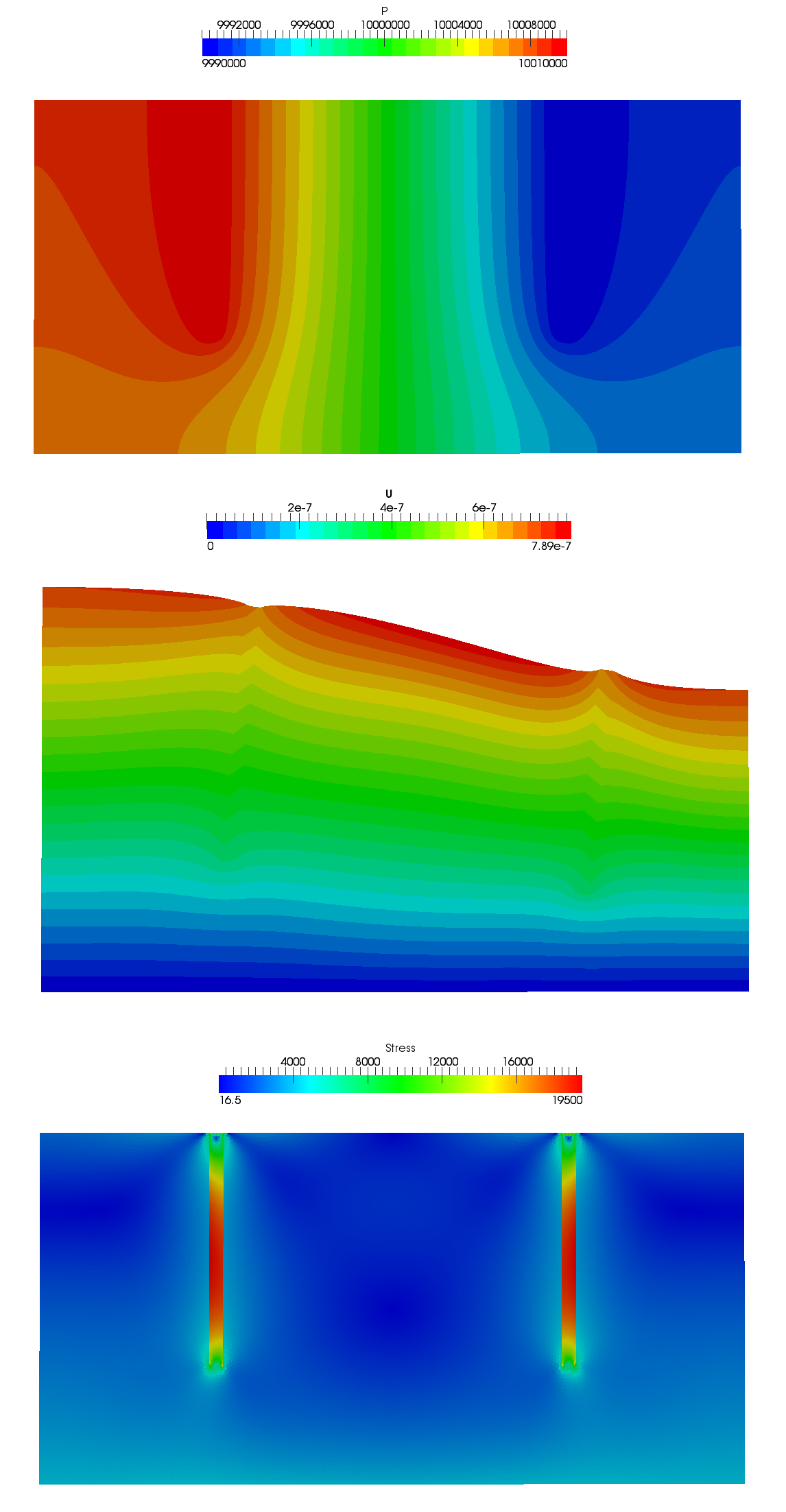}
\end{center}
\caption{Pressure (upper), displacement ($\times 10^6$) (middle) and stress (lower) distributions}
\label{pic:sol}
\end{figure}

To compare the errors produced by the above schemes, we use the fully coupled method with the finest mesh of 40000 cells and time step $\tau = 0.1$ day in order to calculate the benchmark solution for evaluating the error.  
For error comparison we use 
\[
\varepsilon_p = || p_e - p ||,
  \quad  || p_e - p ||^2 =  \int_{\Omega} ( p_e -  p)^2 d\bm x ,
\]
where $ p_e$ is the benchmark solution of pressure and $p$ -- calculated pressure.
Figures \ref{pic:err-good} and \ref{pic:err-bad} demonstrate 
the errors obtained using the splitting schemes 
(\ref{eq:sc-D}),  (\ref{eq:sc-L}), (\ref{eq:sc-U}) (D, L, U on Fig.~\ref{pic:err-good} and Fig.~\ref{pic:err-bad})   
and modifications (\ref{eq:RL}), (\ref{eq:RU}) (ML, MU on Fig.~\ref{pic:err-good} and Fig.~\ref{pic:err-bad}) on the mesh of 10000 cells for Tests 1 and 2, respectively. 

For Test 2, We observe numerical instability for our additive representations of the operator $\mathbb{B}$.
But applying the modified schemes for Test 2, we obtain the stable solution. 
As for Test 1, all schemes do work.

\begin{figure}
\begin{center}
\includegraphics[width=0.8\linewidth]{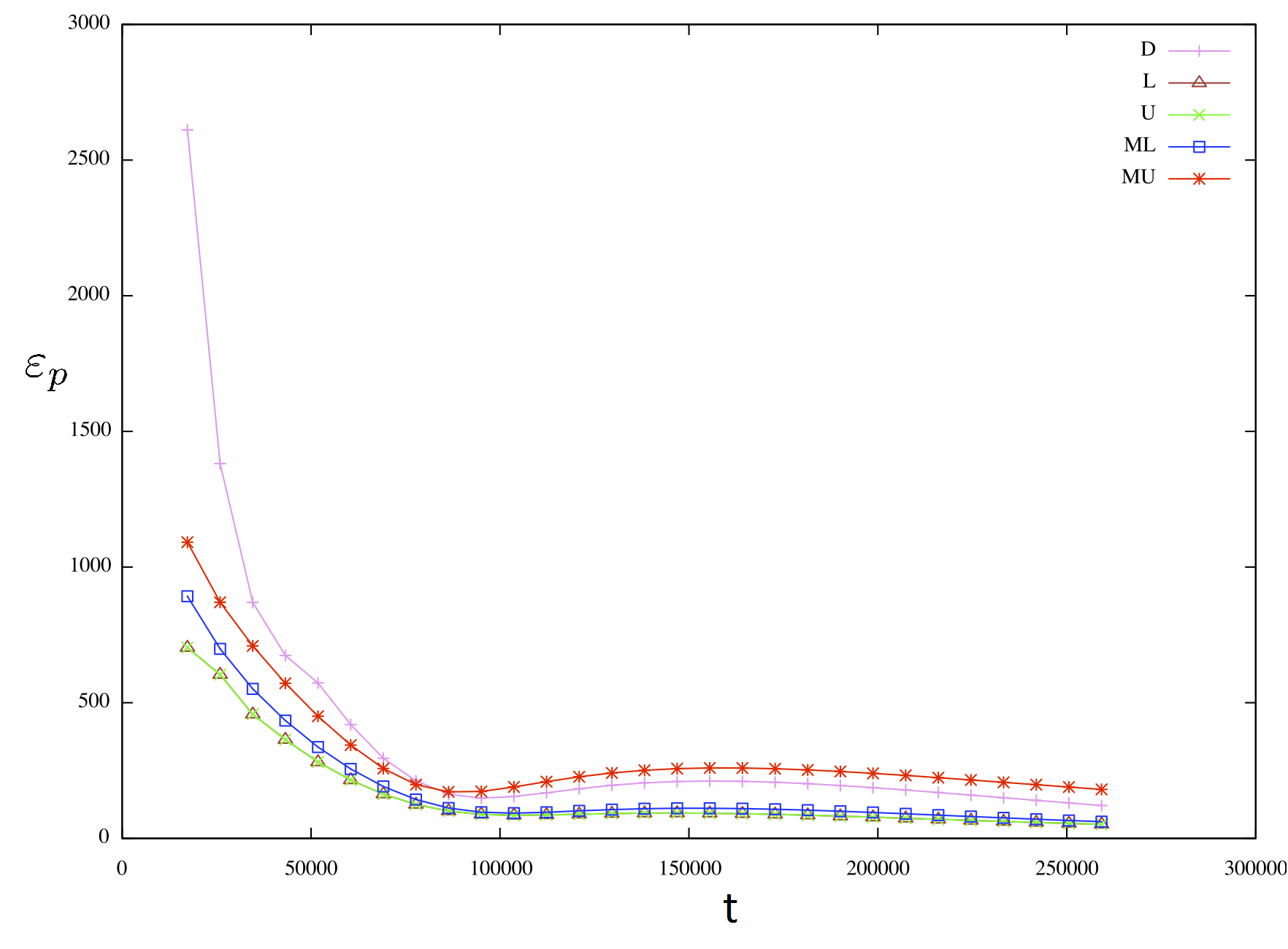} 
\end{center}
\caption{Comparison of the error for the pressure (Test 1)}
\label{pic:err-good} 
\end{figure}

\begin{figure}
\begin{center}
\includegraphics[width=0.8\linewidth]{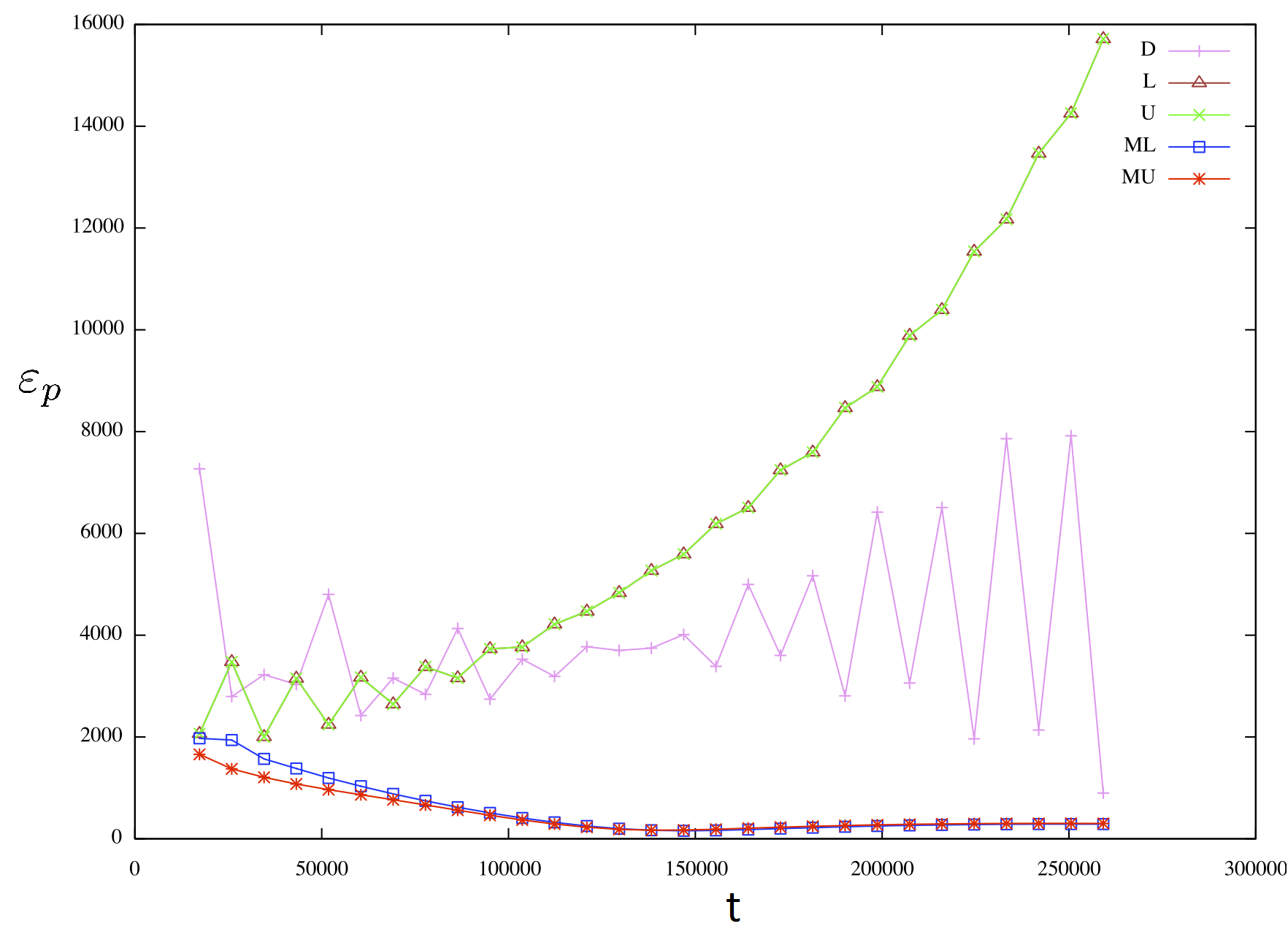} 
\end{center}
\caption{Comparison of the error for the pressure (Test 2)}
\label{pic:err-bad} 
\end{figure}

\section{Three-dimensional poroelasticity problem}

Next, we  consider a three-dimensional reservoir with one vertical injection well and four horizontal production wells.  The reservoir domain is shown in Fig.~\ref{fig:3d-domain} with plotted wells. The domain is as large as $200 \times 200  \times 20 $ m. The production and injection wells  work with  $p_p=9.9975$ MPa and $p_i=10.001$, respectively. The rock and fluid parameters of the problem are similar to Test 2 from the previous section. 

\begin{figure}
\begin{center}
\includegraphics[width=0.8\linewidth]{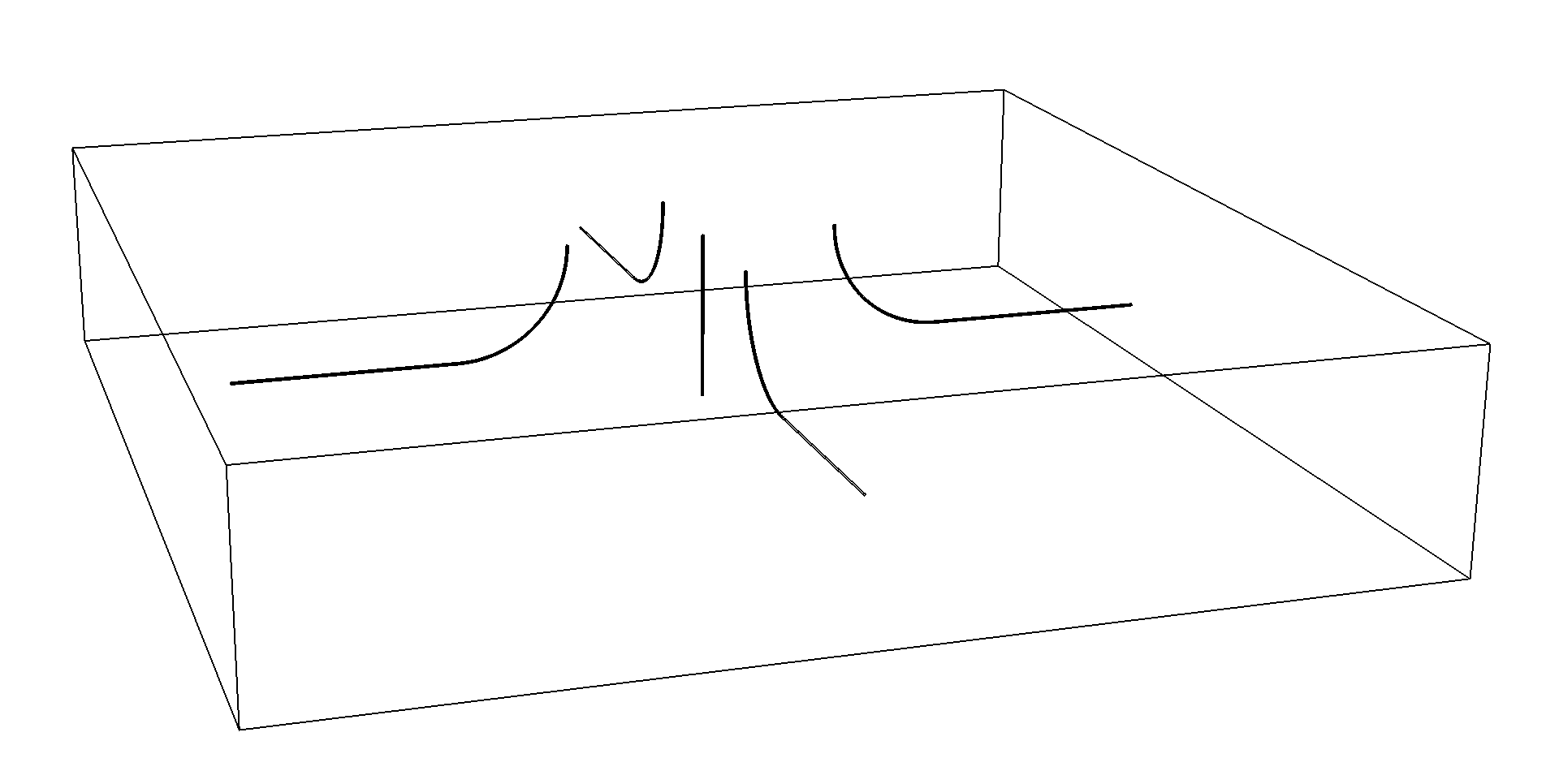}
\caption{Reservoir with plotted wells}
\label{fig:3d-domain}
\end{center}
\end{figure}

Three meshes of different quality were used for predictions (see Fig.~\ref{fig:3d-mesh} for the coarse (upper), medium (middle) and fine (lower picture) grid). 
The numbers of vertices, cells, and degrees of freedom (dof) of $p$, $\bm u$, and $\bm w$ for each mesh are given in Table~\ref{tab:meshes}.  We see that the number of dofs for the displacement field $\bm u$ is much higher than the dofs number for the pressure field $p$. This results from the fact that of the quadratic vector element and the linear scalar element  are used for discretization of the displacement and pressure, respectively. For the fully  coupled method, we search the combined vector $\bm w$ that is sum of $\bm u$ and $p$. 

\begin{figure}
\begin{center}
\includegraphics[width=0.9\linewidth]{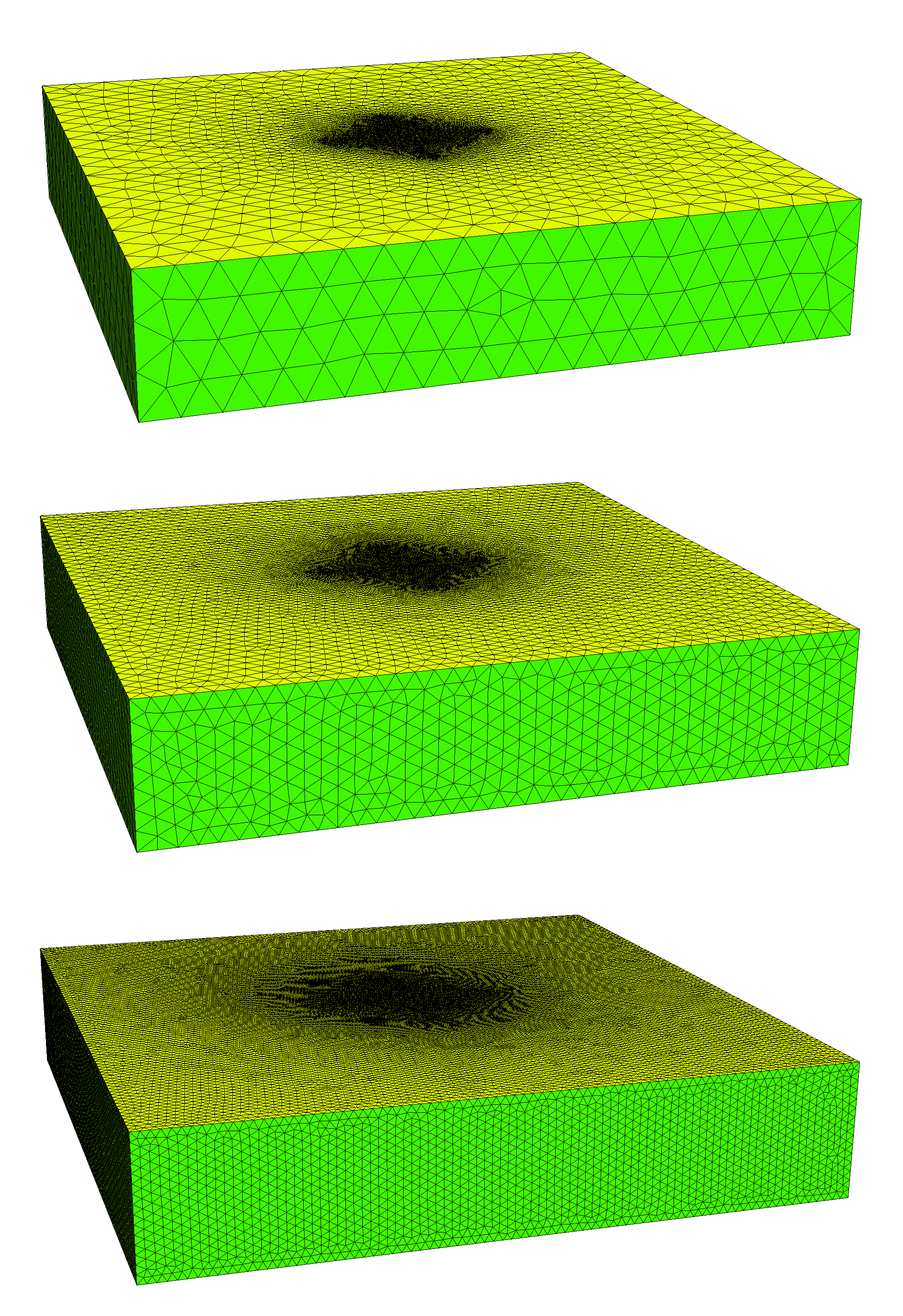}
\caption{Computational meshes: the coarse (upper), medium (middle) and fine (lower picture) grid}
\label{fig:3d-mesh}
\end{center}
\end{figure}

\begin{figure}
\begin{center}
\includegraphics[width=1.0\linewidth]{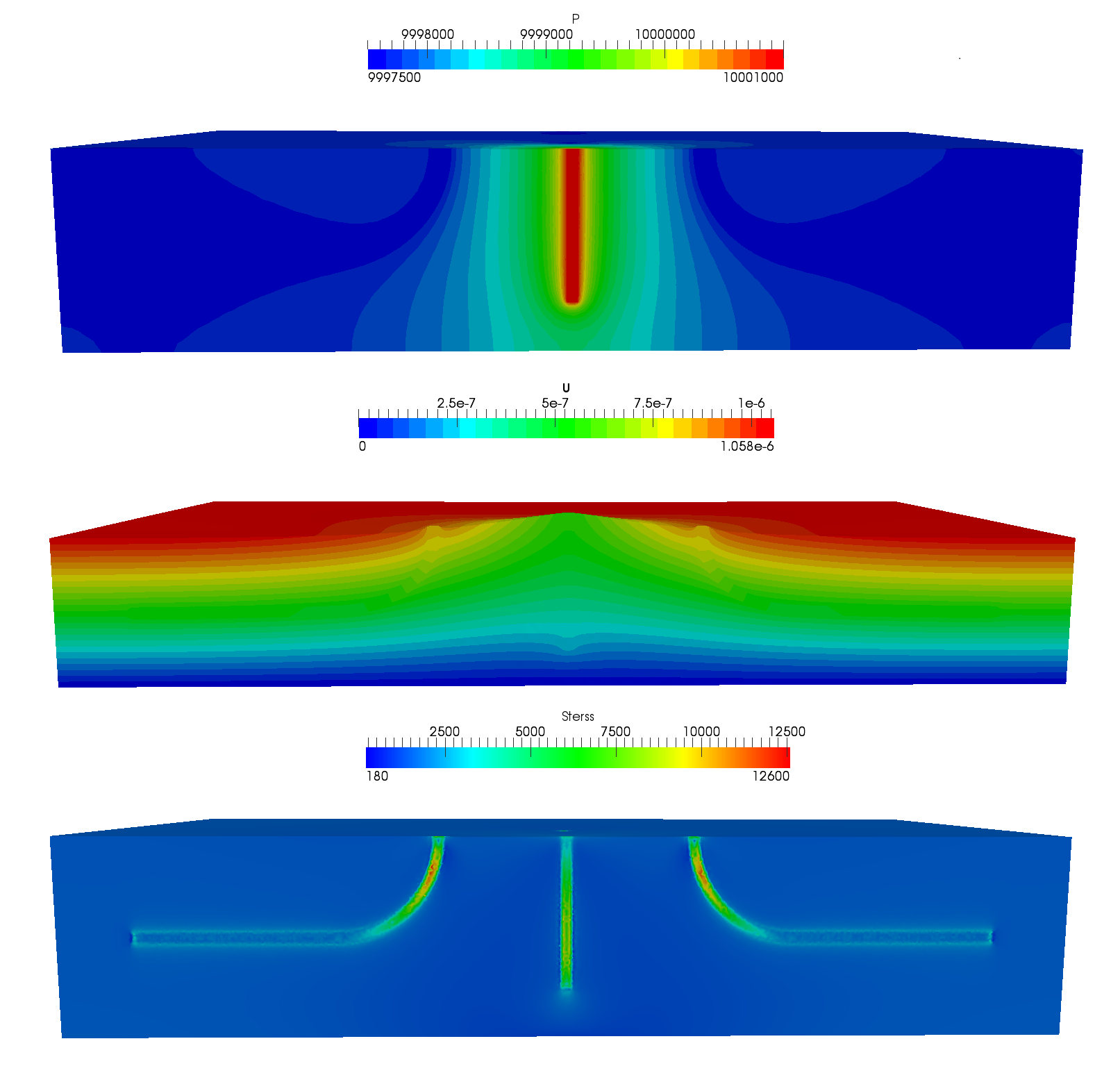}
\end{center}
\caption{Pressure (upper), displacement ($\times 5.0\cdot 10^{6}$) (middle) and stress (lower) distributions}
\label{pic:3d-sol}
\end{figure}

\begin{table}
\begin{center}
\begin{tabular}{lrrr}
\hline 
   & mesh1 		& mesh2 		& mesh3 \\ \hline
Vertices								& 19005		& 30502		& 78956 \\
Cells										& 95088 		&	146541  	& 	376056 \\
Number of $p$  dof				&	19005		& 30502		 & 78956 \\
Number of $\bm u$ dof  	& 415287		& 655179		 & 1691586 \\
Number of $\bm w$ dof 		& 434292		& 685681	 	 & 1770542 \\
\hline 
\end{tabular} 
\end{center}
\caption{Parameters of meshes}
\label{tab:meshes}
\end{table}

The results of numerical experiments are presented in Fig.~\ref{pic:3d-sol}.  The time step $\tau=1$ day and the maximum time $T_{max} = 30$ day were used.
The generalized minimal residual method (GMRES) with incomplete LU-factorization preconditioner (ILU) was chosen as the linear solver for of both schemes.  
Numerical experiments on a different number of processors were conducted.
Calculation times for the coupled  scheme and the splitting schemes (MU) are shown in Table~\ref{tab:coupled_split_comparison}.  A good parallelization efficiency is observed.

The parallel code was run on a cluster \textit{Arian Kuzmin} of North–Eastern Federal University.
The cluster consists of 160 computing nodes, each node has two 6-core processors Intel Xeon X5675 3.07 GHz with 48 GB RAM.

\begin{table}
\begin{center}
\begin{tabular}{r|rrrrrr}
\hline 
np	& \multicolumn{2}{c}{mesh1}  & \multicolumn{2}{c}{mesh2} 	&	\multicolumn{2}{c}{mesh3} 		\\	
\hhline{~------}
		& coupled & split & 	coupled	& split	& coupled	 &		split \\ \hline
1		&	4869.57	&	3129.18	&	9140.62	&	6174.50	&	40918.40		&	32566.40	\\
2		&	2101.24	&	1247.07	&	3826.16	&	2438.96	&	16813.20		&11360.30	\\
4		&	1020.60	&	572.50		&	1647.10	&	1059.33	&	5959.41		&	4285.02	\\
8		&	598.61		&	376.68		&	956.65		&	624.52		&	3105.15		&	2177.98	\\
16	&	451.77		&	322.40		&	635.43		&	492.61		&	1768.68		&	1235.90	\\
32	&	230.23		&	200.69		&	383.93		&	312.09		&	1015.04		&	657.06	\\ 
\hline 
\end{tabular} 
\end{center}
\caption{Comparison of coupled and RU-split methods run times}
\label{tab:coupled_split_comparison}
\end{table}

\section{Conclusions}

\begin{enumerate}
\item Stability estimates of weighted schemes for the coupled system of equations are obtained for the differential and discrete problem using Samarskii's theory of stability for operator-difference schemes.
\item Splitting schemes are constructed using an additive representation of the operator at the time derivative.  Undrained and fixed stress split methods are presented as regularized schemes. 
\item It was found that the additive schemes do not always work, and for some problem parameters, we need to use the regularized schemes. 
\item For solving the three-dimensional problem, parallel computations were performed using the standard technique. They demonstrate good parallelization efficiency for both the coupled and splitting schemes.

\end{enumerate}

\end{document}